\numberwithin{equation}{section}
\theoremstyle{plain}
\newtheorem{thm}{Theorem}[section]
\newtheorem{lem}[thm]{Lemma}
\theoremstyle{definition}
\newtheorem{defn}[thm]{Definition}
\theoremstyle{remark}
\newcommand{\m}{\mathfrak{m}}
\renewcommand{\O}{\mathcal{O}}
\newcommand{\B}{\mathrm{B}}
\newcommand{\M}{\mathrm{M}}
\newcommand{\G}{\mathrm{G}}
\DeclareMathOperator{\im}{Im}
\DeclareMathOperator{\Spec}{Spec}
\DeclareMathOperator{\Gr}{Gr}
\DeclareMathOperator{\Fil}{Fil}
\DeclareMathOperator{\Frac}{Frac}
\DeclareFontFamily{U}{wncy}{}
\DeclareFontShape{U}{wncy}{m}{n}{<->wncyr10}{}
\DeclareSymbolFont{mcy}{U}{wncy}{m}{n}
\title{Semistable reduction for multi-filtered vector spaces}
\author{Shizhang Li}
\begin{document}

\maketitle

\section{Introduction}

In this paper, \(\O\) will be a valuation ring. We denote \(\Frac(\O)\) by \(K\)
and \(\O/\m\) by \(k\). For an \(\O\) module \(\M\), we denote \(\M_K=\M
\otimes_{\O} K\) and \(\overline{\M}=\M \otimes_{\O} k\). Also for an \(\O/\pi\)
module \(\widetilde{\M}\) we still denote \(\overline{\M}=\widetilde{\M}
\otimes_{\O/\pi} k\) for some \(\pi \in \m\). Unless stated otherwise, all the
modules considered in this paper will be finitely presented over corresponding
ring.

\begin{defn}
  A multi-filtered vector space \((V,\Fil^{\bullet}_i)\) over a field \(F\) is a
  finite dimensional \(F\)-vector space \(V\) together with finitely many
  filtrations \(\Fil^{\bullet}_i, 1 \leq i \leq n\), such that for all \(i\) the
  filtration \(\Fil^{\bullet}_i\) is
  \begin{enumerate}
  \item decreasing,
  \item indexed by natural numbers,
  \item exhaustive and
  \item separated:
  \end{enumerate}
  \[V=\Fil^0_i \supseteq \Fil^1_i \supseteq \cdots \supseteq \Fil^{l_i}_i=0.\]
  
  For any subspace (resp.\ quotient) \(W\) of \(V\), we give the obvious induced
  multi-filtration on them.
\end{defn}

\begin{defn}
  \begin{itemize}
  \item The weight of a multi-filtered vector space is
    \[w(V,\Fil^{\bullet}_i)=\sum_i \sum_j j \cdot \dim(\Fil_i^j/\Fil_i^{j+1}).\]
  \item The slope of a multi-filtered vector space is defined as
    \[\mu(V,\Fil^{\bullet}_i)=\frac{w(V,\Fil^{\bullet}_i)}{\dim(V)}.\]
  \item A multi-filtered vector space \((V,\Fil^{\bullet}_i)\) is called
    \emph{semistable} if for any nonzero subspace \(W \subseteq V\), we have
    \(\mu(W) \leq \mu(V)\).
  \end{itemize}
\end{defn}

If \(\M\) is an \(\O\)-lattice of a multi-filtered \(K\)-vector space \((\M
\otimes_{\O} K,\Fil^{\bullet}_i)\), we will still denote the induced filtration
\(\Fil^{\bullet}_i \cap \M\) on \(\M\) by \(\Fil^{\bullet}_i\). And we use the
notation \(\overline{\Fil^{\bullet}_i}\) (resp.\
\(\widetilde{\Fil^{\bullet}_i}\)) to denote the induced filtration on
\(\overline{\M}\) (resp.\ \(\widetilde{\M}=\M \otimes_{\O} \O/\pi\) for some
\(\pi \in \m\)). We call such a \(\M\) an \emph{integral model} of \((\M
\otimes_{\O} K,\Fil^{\bullet}_i)\).

The main theorem of this paper is the following analogue of Langton's theorem in
the setting of multi-filtered vector spaces.

\begin{thm}[Main Theorem]
\label{Main Theorem}
Let \(K\) be a valued field. For any semistable multi-filtered vector space
\((V,\mathrm{Fil}^{\bullet}_i)\), there exits an integral model
\((\M,\mathrm{Fil}^{\bullet}_i)\) such that its reduction \(\overline{\M}\)
with its induced filtration is again semistable.
\end{thm}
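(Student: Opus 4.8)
The plan is to adapt Langton's proof of semistable reduction, performing elementary (Hecke-type) modifications of an integral model until its special fibre is semistable. First I would record two preliminaries. For \emph{any} lattice \(\M\) in \(V\), each induced submodule \(\Fil^{j}_{i}\cap\M\) is saturated in \(\M\): if \(ax\in\Fil^{j}_{i}\) with \(a\in\O\setminus\{0\}\) and \(x\in\M\), then \(x\in\Fil^{j}_{i}\) because \(\Fil^{j}_{i}\) is a \(K\)-subspace. Hence \(\M/(\Fil^{j}_{i}\cap\M)\) is finitely presented and torsion-free, so free over \(\O\), and reduction modulo \(\m\) stays exact on these inclusions, giving \(\dim_{k}\overline{\Fil^{j}_{i}}=\dim_{K}\Fil^{j}_{i}\) for all \(i,j\). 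Therefore \(w(\overline{\M})=w(V)\) and \(\mu(\overline{\M})=\mu(V)=:\mu\) for every integral model, so a destabilizing subspace of \(\overline{\M}\) is exactly a subspace of slope strictly larger than \(\mu\). Second, I would verify that multi-filtered vector spaces over a field carry Harder--Narasimhan filtrations: the weight is additive on short exact sequences, since \(\dim\Fil^{j}_{i}(V)=\dim\Fil^{j}_{i}(W)+\dim\Fil^{j}_{i}(V/W)\), and an inclusion--exclusion estimate yields the super-additivity \(w(W_{1}+W_{2})+w(W_{1}\cap W_{2})\ge w(W_{1})+w(W_{2})\) together with \(\dim(W_{1}+W_{2})+\dim(W_{1}\cap W_{2})=\dim W_{1}+\dim W_{2}\). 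As the slope takes only finitely many values, these are precisely the axioms producing a unique maximal destabilizing subspace and an \(\HN\) filtration.

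With these in hand, fix \(\pi\in\m\setminus\{0\}\) and start from an arbitrary integral model \(\M_{0}\). If \(\overline{\M_{0}}\) is semistable we are done; otherwise let \(\overline{B}_{0}\subseteq\overline{\M_{0}}\) be the maximal destabilizing subspace, with \(\mu(\overline{B}_{0})>\mu\). I would then define the modified lattice \(\M_{1}\) as the preimage of \(\overline{B}_{0}\) under \(\M_{0}\to\M_{0}/\pi\M_{0}\to\overline{\M_{0}}\), so that \(\pi\M_{0}\subseteq\M_{1}\subseteq\M_{0}\) and \(\M_{0}/\M_{1}\cong\overline{C}_{0}\), where \(\overline{C}_{0}:=\overline{\M_{0}}/\overline{B}_{0}\). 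A direct computation (the usual Langton ``swap'') then gives a short exact sequence of multi-filtered \(k\)-vector spaces
\[
0\longrightarrow\overline{C}_{0}\longrightarrow\overline{\M_{1}}\longrightarrow\overline{B}_{0}\longrightarrow0,
\]
so the destabilizing quotient \(\overline{C}_{0}\) (of slope \(\mu(\overline{C}_{0})<\mu\)) becomes a subobject while \(\overline{B}_{0}\) becomes a quotient. I would iterate this to produce a chain \(\M_{0}\supseteq\M_{1}\supseteq\M_{2}\supseteq\cdots\) with \(\pi\M_{n}\subseteq\M_{n+1}\), all sharing the generic fibre \(V\). A point to watch is that the filtrations induced on \(\overline{B}_{n}\) and \(\overline{C}_{n}\) as subquotients of \(\overline{\M_{n+1}}\) differ, by an index shift, from those they carry inside \(\overline{\M_{n}}\), so the weight does not change in an obviously monotone way.

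The hard part will be showing that the process terminates. Following Langton, I would first prove that \(\mu_{\max}(\overline{\M_{n+1}})\le\mu_{\max}(\overline{\M_{n}})\): any subspace of \(\overline{\M_{n+1}}\) of slope \(>\mu\) maps into the quotient \(\overline{B}_{n}\) with kernel inside \(\overline{C}_{n}\) (slope \(<\mu\)), and comparing slopes across the swap sequence bounds its slope by \(\mu_{\max}(\overline{\M_{n}})\). Thus \(\mu_{\max}(\overline{\M_{n}})\) is non-increasing; since weights are integers and dimensions are bounded it takes finitely many values, hence stabilizes at some \(\mu^{*}\). If the process never stopped we would have \(\mu^{*}>\mu\) permanently, and then the standard infinite-descent argument applies: the saturations in \(\M_{0}\) of the preimages of the \(\overline{B}_{n}\) form a chain of sub-\(\O\)-modules that must stabilize, and the generic fibre of the limit is a \(K\)-subspace of \(V\) of slope \(\ge\mu^{*}>\mu\), contradicting the semistability of \(V\). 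Hence after finitely many modifications \(\overline{\M_{n}}\) is semistable. The remaining care concerns the valuation-ring generality: since \(\O\) need not be a discrete valuation ring I would run all modifications through the fixed \(\pi\) and pass from \(\M/\pi\M\) to \(\overline{\M}=\M/\m\M\) at the end, using the saturation fact above to keep every dimension, and hence every slope, equal to its generic value; verifying the swap sequence and Langton's inequality in this non-noetherian setting is where the main technical work lies.
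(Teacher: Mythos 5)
Your outline is Langton's classical argument over a discrete valuation ring, and over a DVR it would essentially go through (your preliminaries on saturation of \(\Fil^j_i\cap\M\), preservation of weights under reduction, and HN theory all match facts the paper uses). But the theorem is stated for an \emph{arbitrary} valuation ring \(\O\), and there the central step of your proposal breaks: the modified module \(\M_{1}\), defined as the preimage in \(\M_0\) of \(\overline{B}_0\subseteq\overline{\M_0}=\M_0/\m\M_0\), contains \(\m\M_0\) and is \emph{not finitely generated} whenever \(\m\) is not principal (project to a complementary coordinate: you would exhibit \(\m\) as a finitely generated, hence principal, ideal). So \(\M_1\) is not an integral model at all. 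Your proposed fix --- ``run all modifications through the fixed \(\pi\)'' --- means replacing \(\overline{B}_0\) by a lift \(\widetilde{B}_0\subseteq\M_0/\pi\M_0\) with flat quotient and taking its preimage; but for the swap sequence to be a sequence of \emph{multi-filtered} vector spaces with the correct induced filtrations (which your slope computations require), the lift must satisfy the compatibility \(\widetilde{\Fil^{j}_i}\cap\widetilde{B}_0\twoheadrightarrow\overline{\Fil^{j}_i}\cap\overline{B}_0\) for \emph{all} \(i,j\) simultaneously. With a single filtration such a lift always exists, but with several filtrations it need not exist modulo your fixed \(\pi\): this obstruction is exactly what the paper's technical core (the notion ``liftable modulo \(\pi\)'', the finite generation of \(\mathrm{Hom}_{\Fil^\bullet}\), the structure lemma, and the existence of a \(\pi\) of \emph{biggest} valuation modulo which lifting is possible) is built to control. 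Nothing in your proposal supplies this, and no single \(\pi\) chosen in advance will work for all steps.

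The termination argument has a second gap. As written, ``the saturations in \(\M_0\) of the preimages of the \(\overline{B}_n\)'' is the constant chain \(\M_0=\M_0=\cdots\), since each preimage \(\M_{n+1}\) has full rank; Langton's actual infinite descent instead shows that eventually \(\overline{B}_{n+1}\to\overline{B}_n\) is an isomorphism and then produces a destabilizing subobject via a \(\pi\)-adic limit --- a noetherian/completion argument that does not transfer to a general valuation ring, where neither chains of submodules stabilize nor is \(\m\) \(\pi\)-adically accessible. The paper avoids infinite descent entirely, and this is where the choice of \(\pi\) with \emph{maximal} valuation pays off: maximality implies the swap sequence \(0\to\overline{\G}\to\overline{\M'}\to\overline{\B}\to 0\) admits no splitting of multi-filtered vector spaces, and from this one deduces that the pair \(\bigl(\mu(\overline{\B}),\dim(\overline{\B})\bigr)\) of the maximal destabilizing subspace strictly decreases in lexicographic order at each step; since both entries range over finite sets, the process stops after finitely many modifications. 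So the two proofs differ not just in technique but in logical shape: yours needs a limit object whose existence is unclear here, while the paper's is a finite descent on a discrete invariant. To repair your proposal you would need to rebuild precisely the liftability-modulo-\(\pi\) machinery and replace the descent step, at which point you have reproduced the paper's proof.
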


\section{Proof of the Main Theorem}
For technical reason, let us make the following definition.

\begin{defn}
  A submodule of \(\mathcal{O}^n\) (resp. \({({\mathcal{O}/\pi})}^n\)) is said
  to be saturated if the quotient is flat over \(\mathcal{O}\) (resp.
  \(\mathcal{O}/\pi\)).
\end{defn}

\begin{defn}
\label{setup}
  Let \(\M\) be an \(\O\)-lattice of a multi-filtered \(K\)-vector space \((\M
  \otimes_{\O} K,\Fil^{\bullet}_i)\) with its induced filtrations. Given a short
  exact sequence of \(k\)-vector spaces
  \[
    0 \to \overline{\B} \to \overline{\M} \to \overline{\G} \to 0
  \]
  we say it is \emph{liftable modulo \(\pi \in \m\)} if there exists a short
  exact sequence of \(\mathcal{O}/\pi\)-modules.
  \[
    0 \to \widetilde{\B} \to \widetilde{\M}=\M/(\pi \M) \to \widetilde{\G} \to 0
  \]
  such that
  \begin{enumerate}
  \item \(\widetilde{\G}\) is flat over \(\O/\pi\)
  \item this sequence reduces to the above sequence and
  \item \(\widetilde{\Fil^{j}_i} \cap \widetilde{\B}\) surjects onto
    \(\overline{\Fil^{j}_i} \cap \overline{\B}\).
  \end{enumerate}
\end{defn}  

\begin{lem}
\label{lift one filtration}
If there is only one filtration \(\mathrm{F}^{\bullet}\) on \(\mathrm{M}\), then
any short exact sequence \(0 \to \overline{\mathrm{B}} \to \overline{\mathrm{M}}
\to \overline{\mathrm{G}} \to 0\) is liftable, i.e., there exists short exact
sequence of \(\mathcal{O}\)-modules \(0 \to \mathrm{B} \to \mathrm{M} \to
\mathrm{G} \to 0\) such that
  \begin{enumerate}
  \item \(\mathrm{G}\) is flat over \(\mathcal{O}\)
  \item this sequence reduces to the above sequence
  \item \(\mathrm{F}^{\bullet} \cap \mathrm{B}\) surjects onto
    \(\overline{\mathrm{F}^{\bullet}} \cap \overline{\mathrm{B}}\).
  \end{enumerate}
  Moreover, fix one lifting and splitting \(\mathrm{M}=\mathrm{B} \oplus
  \mathrm{G}\), then all different liftings are graphs of morphisms in \(
  \mathfrak{m} \mathrm{Hom}_{\mathrm{F}^{\bullet}}(\mathrm{B},\mathrm{G})\).
\end{lem}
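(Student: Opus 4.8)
The plan is to reduce everything to linear algebra over \(\O\) by splitting the single filtration on \(\M\), and then to build \(\B\) from a carefully chosen basis of \(\overline{\B}\). First I would record the structural facts that make this possible. Since \(\O\) is a valuation ring, flat is the same as torsion-free and a finitely presented torsion-free module is free. The induced filtration \(\mathrm{F}^j=\Fil^j\cap\M\) is by saturated submodules, because the natural map \(\M/\mathrm{F}^j\to\M_K/\Fil^j\) is injective, so each quotient is torsion-free. Hence every graded piece \(\mathrm{F}^j/\mathrm{F}^{j+1}\) is free, and by descending induction I can choose an \(\O\)-basis \(e_1,\dots,e_d\) of \(\M\) adapted to \(\mathrm{F}^\bullet\), i.e.\ with weights \(v_s\) such that \(\mathrm{F}^j=\bigoplus_{v_s\ge j}\O e_s\); it reduces to an adapted \(k\)-basis of \(\overline{\M}\) with the same weights.

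For existence I would work on the special fibre first. Over the field \(k\) choose a basis \(\bar b_1,\dots,\bar b_r\) of \(\overline{\B}\) adapted to the induced filtration, with weights \(w_t\) so that \(\{\bar b_t:w_t\ge j\}\) is a \(k\)-basis of \(\overline{\mathrm{F}^j}\cap\overline{\B}\). Because \(\mathrm{F}^{w_t}\) reduces onto \(\overline{\mathrm{F}^{w_t}}\), each \(\bar b_t\) lifts to some \(b_t\in\mathrm{F}^{w_t}\); set \(\B=\bigoplus_t\O b_t\) and \(\G=\M/\B\). Since the \(\bar b_t\) are linearly independent, Nakayama lets me extend \(b_1,\dots,b_r\) to a basis of \(\M\); this makes \(\G\) free (condition (1)) and shows the reduction of \(\B\) is exactly \(\overline{\B}\) (condition (2)). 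For condition (3), note \(\operatorname{span}_\O\{b_t:w_t\ge j\}\subseteq\mathrm{F}^j\cap\B\) already reduces \emph{onto} \(\overline{\mathrm{F}^j}\cap\overline{\B}\), while the image of \(\mathrm{F}^j\cap\B\) is automatically contained in \(\overline{\mathrm{F}^j}\cap\overline{\B}\), giving the desired surjectivity.

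Next I would upgrade this to a filtered splitting, which is what the ``moreover'' clause needs. The key point is that in fact \(\mathrm{F}^j\cap\B=\operatorname{span}_\O\{b_t:w_t\ge j\}\): if \(\sum_t a_t b_t\in\mathrm{F}^j\), then reducing modulo \(\mathrm{F}^j\) kills the high-weight terms, and the classes \(\{b_t\bmod\mathrm{F}^j:w_t<j\}\) are \(\O\)-independent in the free module \(\M/\mathrm{F}^j\), because their further reduction modulo \(\m\) is independent in \(\overline{\M}/\overline{\mathrm{F}^j}\) via \(\overline{\B}/(\overline{\B}\cap\overline{\mathrm{F}^j})\hookrightarrow\overline{\M}/\overline{\mathrm{F}^j}\), after which Nakayama applies. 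Thus \(\B\) is filtration-split, and I can extend \(\{b_t\}\) to an adapted basis of \(\M\) and let \(\G\) be the span of the added vectors: this yields a filtered splitting \(\M=\B\oplus\G\) with \(\mathrm{F}^j=(\mathrm{F}^j\cap\B)\oplus(\mathrm{F}^j\cap\G)\).

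Finally, for the parametrization, take any lifting \(\B'\). It is saturated, hence free, and reduces to \(\overline{\B}\); the projection \(\B'\to\B\) along \(\G\) reduces to the identity of \(\overline{\B}\), so by Nakayama it is an isomorphism and \(\B'\) is the graph of a unique \(\phi\colon\B\to\G\), with basis \(b'_s=b_s+\phi(b_s)\). The condition \(\overline{\B'}=\overline{\B}\) forces \(\bar\phi=0\), i.e.\ \(\phi\in\m\,\mathrm{Hom}(\B,\G)\). It then remains to show that \(\B'\) satisfies (3) \emph{iff} \(\phi\) is filtered. One direction is easy: if \(\phi\) is filtered then \(b'_t\in\mathrm{F}^{w_t}\), which gives (3) exactly as above. \textbf{The main obstacle is the converse}, extracting ``filtered'' from the surjectivity in (3), and I would argue it by downward induction on the weight using the filtered splitting. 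For a given weight \(j\), condition (3) produces, for each \(t\) with \(w_t=j\), an element \(\beta\in\B'\cap\mathrm{F}^j\) with \(\bar\beta=\bar b_t\); its \(\B\)-component lies in \(\mathrm{F}^j\cap\B\), which forces the coefficients of the \(b'_s\) with \(w_s<j\) to vanish and, ranging over \(t\), makes the coefficient matrix on the weight-\(j\) indices reduce to the identity, hence be invertible over \(\O\). The \(\G\)-component then lies in \(\mathrm{F}^j\cap\G\), and inverting the matrix yields \(\phi(b_s)\in\mathrm{F}^{w_s}\cap\G\) for every \(s\) with \(w_s=j\), the higher-weight terms already being filtered by the inductive hypothesis. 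Combined with \(\bar\phi=0\), this identifies the set of all liftings with \(\m\,\mathrm{Hom}_{\mathrm{F}^\bullet}(\B,\G)\).
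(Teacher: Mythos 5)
Your proposal is correct and follows essentially the same route as the paper: construct \(\mathrm{B}\) by lifting a basis of \(\overline{\mathrm{B}}\) adapted to the induced filtration into the corresponding \(\mathrm{F}^{w_t}\), then classify all other liftings as graphs of morphisms via Nakayama. The only difference is one of rigor, to your credit: you explicitly build a \emph{filtered} splitting \(\mathrm{M}=\mathrm{B}\oplus\mathrm{G}\) with \(\mathrm{F}^j=(\mathrm{F}^j\cap\mathrm{B})\oplus(\mathrm{F}^j\cap\mathrm{G})\) and then prove by descending induction on the weight that condition (3) forces the graph morphism to be filtered --- two steps the paper asserts without proof (and the first of which is genuinely needed, since for an arbitrary, non-filtered choice of complement \(\mathrm{G}\) the ``moreover'' clause can fail as stated).
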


\begin{proof}
  Choose a basis of \(\overline{\mathrm{F}^{i}} \cap \overline{\mathrm{B}}\) and
  lift them inside \(\mathrm{F}^{i}\). Then we lift the rest of a basis of
  \(\overline{\mathrm{B}}\) arbitrarily. Let \(\mathrm{B}\) be the sum of
  respective liftings. By construction \(\mathrm{B}\) satisfies (2) and (3).
  Note that \(\mathrm{B}\) is a saturated \(\mathcal{O}\)-submodule in
  \(\mathrm{M}\), hence (1) is also satisfied.
  
  If one has another lifting \( 0 \to \mathrm{B'} \to \mathrm{M} \to \mathrm{G'}
  \to 0\), then the projection from \(\mathrm{B'}\) to \(\mathrm{B}\) must be an
  isomorphism as guaranteed by Nakayama's lemma. Then this lifting
  \(\mathrm{B'}\) is just a graph of some morphism \(f \in
  \mathrm{Hom}(\mathrm{B},\mathrm{G})\). (3) implies that this morphism must
  preserve induced filtration. (2) implies that this morphism must have target
  \(\mathfrak{m} \mathrm{G}\). So we see that \(f \in
  \mathrm{Hom}_{\mathrm{F}^{\bullet}}(\mathrm{B},\mathfrak{m}\mathrm{G}) =
  \mathfrak{m} \mathrm{Hom}_{\mathrm{F}^{\bullet}}(\mathrm{B}, \mathrm{G})\). By
  the same reasoning any such \(f\) would give rise to a lifting satisfying (1),
  (2) and (3).
\end{proof}

The second half of the lemma above can be generalized to multi-filtered
situation in the following way:

\begin{lem}
  In the situation of~\ref{setup}, suppose \( 0 \to \overline{\mathrm{B}} \to
  \overline{\mathrm{M}} \to \overline{\mathrm{G}} \to 0\) is liftable modulo
  \(\pi \in \mathcal{O}\), and fix a splitting
  \(\widetilde{\mathrm{M}}=\widetilde{\mathrm{B}} \oplus
  \widetilde{\mathrm{G}}\). Then all different liftings correspond to graphs of
  morphisms in
  \(\mathrm{Hom}_{\Fil^{\bullet}}(\widetilde{\mathrm{B}},\widetilde{\mathrm{G}})
  \cap \mathfrak{m}
  \mathrm{Hom}(\widetilde{\mathrm{B}},\widetilde{\mathrm{G}})\).
\end{lem}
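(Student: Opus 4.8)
The plan is to rerun the argument from the second half of Lemma~\ref{lift one filtration}, now over the base ring \(\O/\pi\), and to reduce the multi-filtered bookkeeping to the single-filtered case by treating the \(n\) filtrations one at a time.

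First I would fix the free structure. Since \(\M\) is a lattice it is free over \(\O\), so \(\widetilde{\M}=\M/\pi\M\) is free over the local ring \(\O/\pi\); as \(\widetilde{\G}\) is finitely presented and flat it is free, the sequence \(0 \to \widetilde{\B} \to \widetilde{\M} \to \widetilde{\G} \to 0\) splits, and \(\widetilde{\B}\) is free as well. For any other lifting \(\widetilde{\B}'\), the projection \(p\colon \widetilde{\B}' \hookrightarrow \widetilde{\M} \twoheadrightarrow \widetilde{\B}\) attached to the fixed splitting reduces modulo \(\m\) to the identity of \(\overline{\B}\), because by condition (2) both \(\widetilde{\B}\) and \(\widetilde{\B}'\) reduce to \(\overline{\B}=\ker(\overline{\M}\to\overline{\G})\). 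Hence \(p\) is surjective by Nakayama, and being a surjection of finite free \(\O/\pi\)-modules of equal rank it is an isomorphism. Thus \(\widetilde{\B}'\) is the graph of a unique \(f \in \mathrm{Hom}(\widetilde{\B},\widetilde{\G})\), and conversely the graph of any \(f\) satisfies (1) since \(\widetilde{\M}/\mathrm{graph}(f) \cong \widetilde{\G}\) is flat. This identifies liftings with a subset of \(\mathrm{Hom}(\widetilde{\B},\widetilde{\G})\).

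Next I would translate conditions (2) and (3). Condition (2) says exactly that \(\mathrm{graph}(f)\) reduces to \(\overline{\B}\), i.e.\ that the reduction \(\overline{f}\) vanishes, which is equivalent to \(f \in \m\,\mathrm{Hom}(\widetilde{\B},\widetilde{\G})\). For condition (3) I would use that it is a conjunction over all \(i\) and \(j\), while \(\mathrm{Hom}_{\Fil^{\bullet}}(\widetilde{\B},\widetilde{\G})=\bigcap_i \mathrm{Hom}_{\Fil^{\bullet}_i}(\widetilde{\B},\widetilde{\G})\). So it suffices to show, for each single filtration separately, that a lifting \(\widetilde{\B}'\) satisfying (1), (2) and (3) for that one filtration is the graph of an \(f\) preserving it; intersecting over \(i\) then yields \(f \in \mathrm{Hom}_{\Fil^{\bullet}}(\widetilde{\B},\widetilde{\G}) \cap \m\,\mathrm{Hom}(\widetilde{\B},\widetilde{\G})\). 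This reduces the statement to the single-filtered analogue of Lemma~\ref{lift one filtration} over \(\O/\pi\).

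For that single-filtered step I would exploit that each step \(\widetilde{\Fil}^{j}_i\) is a saturated (hence direct-summand) submodule of the free module \(\widetilde{\M}\), which is precisely what lets one choose and lift compatible bases of \(\overline{\Fil}^{j}_i\cap\overline{\B}\) exactly as in the proof of Lemma~\ref{lift one filtration}, and then read off from condition (3) for \(\widetilde{\B}'\) that the graph morphism \(f\) carries \(\widetilde{\Fil}^{j}_i\cap\widetilde{\B}\) into the quotient filtration on \(\widetilde{\G}\). The main obstacle is exactly this implication ``(3) \(\Rightarrow\) \(f\) filtered'': one must upgrade the equality of reductions supplied by condition (3) for \(\widetilde{\B}\) and for \(\widetilde{\B}'\) into an honest statement about the filtered pieces inside \(\widetilde{\B}\), and this comparison has to be carried out over the non-domain ring \(\O/\pi\), where the torsion-free reasoning available over the valuation ring \(\O\) is no longer at hand and must be replaced by the summand property of \(\widetilde{\Fil}^{j}_i\) together with Nakayama. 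The converse, that every \(f \in \mathrm{Hom}_{\Fil^{\bullet}}(\widetilde{\B},\widetilde{\G}) \cap \m\,\mathrm{Hom}(\widetilde{\B},\widetilde{\G})\) gives a lifting satisfying (1)--(3), then follows by the same reasoning as in Lemma~\ref{lift one filtration}.
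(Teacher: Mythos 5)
Your preliminary steps are fine, and they follow the only route the paper itself suggests (note that the paper gives \emph{no} proof of this lemma; it is offered as a direct generalization of Lemma~\ref{lift one filtration}): over the local ring $\O/\pi$ the modules $\widetilde{\M}$, $\widetilde{\G}$, hence $\widetilde{\B}$ are free; every lifting is the graph $\Gamma_f$ of a unique $f\in\mathrm{Hom}(\widetilde{\B},\widetilde{\G})$ by Nakayama; condition (2) translates into $f\in\m\,\mathrm{Hom}(\widetilde{\B},\widetilde{\G})$; and condition (3) may be treated one filtration at a time. The genuine gap is exactly the step you flag as ``the main obstacle'' and then never carry out: the implication that condition (3) for $\Gamma_f$ forces $f(\widetilde{\Fil^{j}}\cap\widetilde{\B})$ into the induced filtration of $\widetilde{\G}$. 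No basis-lifting-plus-Nakayama argument can prove this, because it is false whenever $\m^{2}\widetilde{\G}\neq 0$, i.e.\ whenever $\m^{2}\not\subseteq(\pi)$. What condition (3) actually provides is, for each generator $b$ of $\widetilde{\Fil^{j}}\cap\widetilde{\B}$, some $b'\equiv b\pmod{\m\widetilde{\B}}$ such that $f(b')$ lies in the quotient filtration; then $f(b)=f(b')-f(b'-b)$ is off by an element of $f(\m\widetilde{\B})\subseteq\m^{2}\widetilde{\G}$, which vanishes modulo $\m$ --- so Nakayama can never detect it --- but need not lie in any filtration step.

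Concretely, take $\O=\mathbb{Z}_{p}$ with $p$ odd, $\pi=p^{3}$, $\M=\O^{4}$ with basis $e_{1},\dots,e_{4}$, and the single filtration determined by $\Fil^{1}\cap\M=\O e_{1}\oplus\O(e_{2}+e_{3})$. Let $\overline{\B}=\langle\bar{e}_{1},\bar{e}_{2}\rangle$ and fix the lifting $\widetilde{\B}=\langle e_{1},e_{2}\rangle$, $\widetilde{\G}=\langle e_{3},e_{4}\rangle$: conditions (1)--(3) hold, with $\widetilde{\Fil^{1}}\cap\widetilde{\B}=\langle e_{1}\rangle$ surjecting onto $\overline{\Fil^{1}}\cap\overline{\B}=\langle\bar{e}_{1}\rangle$, and the quotient filtration on $\widetilde{\G}$ equal to $\langle e_{3}\rangle$. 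Define $f(e_{1})=pe_{3}-p^{2}e_{4}$, $f(e_{2})=pe_{4}$, an element of $\m\,\mathrm{Hom}(\widetilde{\B},\widetilde{\G})$. In coordinates, $\alpha(e_{1}+pe_{3}-p^{2}e_{4})+\beta(e_{2}+pe_{4})=(\alpha,\beta,p\alpha,p\beta-p^{2}\alpha)$ lies in $\widetilde{\Fil^{1}}$ (whose elements are exactly those of the form $(\gamma,\delta,\delta,0)$) if and only if $\beta=p\alpha$, the last coordinate then vanishing automatically; hence
\[
\widetilde{\Fil^{1}}\cap\Gamma_{f}=\bigl\langle e_{1}+p(e_{2}+e_{3})\bigr\rangle,
\]
which surjects onto $\langle\bar{e}_{1}\rangle$, so $\Gamma_{f}$ is a lifting satisfying (1)--(3). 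But $f(e_{1})=pe_{3}-p^{2}e_{4}$ lies neither in $\langle e_{3}\rangle$ nor in $\widetilde{\Fil^{1}}\cap\widetilde{\G}=0$, since $p^{2}\neq 0$ in $\O/p^{3}$: the graph morphism preserves no reasonable induced filtration. Worse, repeating the computation for $2f$ leaves the residual term $2p^{2}\alpha$ in the last coordinate, which forces $\alpha\in(p)$, so $\widetilde{\Fil^{1}}\cap\Gamma_{2f}$ reduces to $0$ and $\Gamma_{2f}$ is \emph{not} a lifting. The set of liftings thus contains $0$ and $f$ but not $f+f$, so it is not a coset of \emph{any} submodule of $\mathrm{Hom}(\widetilde{\B},\widetilde{\G})$, and no definition of $\mathrm{Hom}_{\Fil^{\bullet}}(\widetilde{\B},\widetilde{\G})$ can make the stated correspondence true. (The same vectors, read over $\O$ instead of $\O/\pi$, contradict the second half of Lemma~\ref{lift one filtration} as well; the implication is only safe when $\m^{2}\subseteq(\pi)$, which is why examples over $\O/p^{2}$ do not reveal the problem.) So the defect lies in the statement itself --- and it propagates to Lemma~\ref{lift to the most}, which relies on this coset description --- not in some repairable shortcoming of your argument; but as written, your proposal asserts precisely this critical implication without proof, and that implication is the point of failure.
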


It's nice to prove the following lemma directly:

\begin{lem}
\label{fg}
  In the situation of the lemma above,
  \(\mathrm{Hom}_{\mathrm{Fil^{\bullet}}}(\widetilde{\mathrm{B}},\widetilde{\mathrm{G}})\)
  is always a finitely generated \((\mathcal{O}/\pi)\)-module.
\end{lem}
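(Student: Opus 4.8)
The plan is to realize $\mathrm{Hom}_{\Fil^{\bullet}}(\widetilde{\B},\widetilde{\G})$ as the kernel of an explicit $\O/\pi$-linear map between finitely presented modules, and then to invoke coherence of the ring $\O/\pi$. First I would record the structural facts. By the standing finiteness convention $\widetilde{\B}$ and $\widetilde{\G}$ are finitely presented; since $\widetilde{\G}$ is moreover flat over the local ring $\O/\pi$ it is free, so the sequence $0\to\widetilde{\B}\to\widetilde{\M}\to\widetilde{\G}\to 0$ splits and $\widetilde{\B}$ is a free direct summand of $\widetilde{\M}=\M/\pi\M$ (which is itself free, as $\M$ is a finitely generated torsion-free module over the valuation ring $\O$). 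Each $\widetilde{\Fil}^{j}_{i}$ is the image of $\Fil^{j}_{i}\cap\M$, hence a finitely generated (indeed direct-summand) submodule of $\widetilde{\M}$; writing $N^{j}_{i}\subseteq\widetilde{\G}$ for its image, a homomorphism $f$ is filtered exactly when for all $i,j$ the composite $\widetilde{\Fil}^{j}_{i}\cap\widetilde{\B}\hookrightarrow\widetilde{\B}\xrightarrow{f}\widetilde{\G}\twoheadrightarrow\widetilde{\G}/N^{j}_{i}$ vanishes. Thus $\mathrm{Hom}_{\Fil^{\bullet}}(\widetilde{\B},\widetilde{\G})=\ker\Phi$, where
\[
  \Phi\colon \mathrm{Hom}(\widetilde{\B},\widetilde{\G})\longrightarrow\bigoplus_{i,j}\mathrm{Hom}\bigl(\widetilde{\Fil}^{j}_{i}\cap\widetilde{\B},\ \widetilde{\G}/N^{j}_{i}\bigr)
\]
sends $f$ to the tuple of these composites.

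The key input, and the only place the valuation structure is essential, is that $\O/\pi$ is a coherent ring. I would verify this directly via Chase's criterion: the intersection of two finitely generated ideals is finitely generated, and the annihilator of every element is finitely generated. The first holds trivially, since the ideals of $\O/\pi$ are totally ordered by inclusion (they correspond to the ideals of $\O$ containing $\pi$, and $\O$ is a valuation ring), so the intersection of two finitely generated—hence principal—ideals is just the smaller one. For the second, given $a\in\O/\pi$ with a lift of valuation $v(a)<v(\pi)$, one computes that $\mathrm{ann}(a)$ consists of the classes of those $x$ with $v(x)\geq v(\pi)-v(a)$; because the value group is a group, the threshold $v(\pi)-v(a)$ is attained by some element, whose class generates $\mathrm{ann}(a)$. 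Hence annihilators are principal, and $\O/\pi$ is coherent.

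With coherence established the remainder is formal. Over a coherent ring every finitely generated submodule of a free module is finitely presented, so $\widetilde{\Fil}^{j}_{i}\cap\widetilde{\B}$—realized as the kernel of the map $\widetilde{\Fil}^{j}_{i}\oplus\widetilde{\B}\to\widetilde{\M}$ between finitely presented modules—is finitely presented, and $\widetilde{\G}/N^{j}_{i}$ is finitely presented as a quotient of a free module by a finitely generated submodule. Consequently each $\mathrm{Hom}$ between finitely presented modules is finitely presented, as is the finite direct sum forming the target of $\Phi$; and $\mathrm{Hom}(\widetilde{\B},\widetilde{\G})$ is free of finite rank. Thus $\Phi$ is a map between finitely presented $\O/\pi$-modules, and the kernel of such a map over a coherent ring is again finitely presented, in particular finitely generated, which is the assertion. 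I expect the main obstacle to be exactly the non-Noetherianity of $\O/\pi$: finite generation of the filtration intersections and of the filtered $\mathrm{Hom}$ cannot be taken for granted, and the entire argument rests on the coherence step, whose substantive content is the principality of element annihilators coming from the value group being a group.
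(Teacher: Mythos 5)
Your proof is correct, but it takes a genuinely different route from the paper's. The paper proceeds by induction on the number of filtrations: it writes the \(n\)-filtration Hom as the intersection of the \((n-1)\)-filtration Hom (finitely generated by induction) with the single-filtration Hom, which by Lemma~\ref{lift one filtration} is a \emph{saturated} submodule of the free module \(\mathrm{Hom}(\widetilde{\mathrm{B}},\widetilde{\mathrm{G}})\); splitting off the saturated factor, the intersection becomes the kernel of a projection restricted to the finitely generated piece, and coherence of \(\mathcal{O}/\pi\) — which the paper asserts without proof — finishes. You instead treat all filtrations simultaneously, realizing \(\mathrm{Hom}_{\Fil^{\bullet}}(\widetilde{\mathrm{B}},\widetilde{\mathrm{G}})\) as the kernel of a single map \(\Phi\) from the free module \(\mathrm{Hom}(\widetilde{\mathrm{B}},\widetilde{\mathrm{G}})\) into a finite direct sum of Homs into quotients, so no induction and no appeal to the saturation statement of Lemma~\ref{lift one filtration} is needed; and you actually prove coherence of \(\mathcal{O}/\pi\) (total ordering of ideals, plus principality of annihilators using that the value group is a group), which the paper only cites. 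One point worth flagging: your target filtration on \(\widetilde{\mathrm{G}}\) is the image \(N^{j}_{i}\) of \(\widetilde{\Fil^{j}_i}\) under the projection, which matches the paper's convention that quotients carry the induced (image) filtration; if one instead read \(\mathrm{Hom}_{\Fil^{\bullet}}\) with the intersection filtration \(\widetilde{\Fil^{j}_i}\cap\widetilde{\mathrm{G}}\) on the target, your kernel argument would go through verbatim with \(N^{j}_{i}\) replaced accordingly, so nothing hinges on this choice. As for what each approach buys: yours is self-contained and has fewer dependencies, whereas the paper's formulation — a finitely generated submodule intersected with a saturated one inside a free module — is exactly the shape reused in Lemma~\ref{zhihe} and Lemma~\ref{lift to the most}, so its proof doubles as setup for the remainder of the argument.
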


\begin{proof}
  We will prove by induction on the number of filtrations. The homomorphisms
  preserve all the \(n\) filtrations are exactly those preserve the first
  \(n-1\) filtrations and preserve the last filtration. Hence as a submodule of
  \(\mathrm{Hom}(\widetilde{\mathrm{B}},\widetilde{\mathrm{G}})\) (which is
  abstractly isomorphic to \({(\mathcal{O}/\pi)}^n\)), we see that
  \[\mathrm{Hom}_{\mathrm{Fil^{\bullet}_1,\dots,Fil^{\bullet}_n}}(\widetilde{\mathrm{B}},\widetilde{\mathrm{G}})=\mathrm{Hom}_{\mathrm{Fil^{\bullet}_1,\dots,Fil^{\bullet}_{n-1}}}(\widetilde{\mathrm{B}},\widetilde{\mathrm{G}})
    \cap
    \mathrm{Hom}_{\mathrm{Fil^{\bullet}_n}}(\widetilde{\mathrm{B}},\widetilde{\mathrm{G}})\]
  By induction hypothesis we see that
  \(\mathrm{Hom}_{\mathrm{Fil^{\bullet}_1,\dots,Fil^{\bullet}_{n-1}}}(\widetilde{\mathrm{B}},\widetilde{\mathrm{G}})\)
  is a finitely generated submodule of
  \(\mathrm{Hom}(\widetilde{\mathrm{B}},\widetilde{\mathrm{G}})\) and by
  Lemma~\ref{lift one filtration}
  \(\mathrm{Hom}_{\mathrm{Fil^{\bullet}_n}}(\widetilde{\mathrm{B}},\widetilde{\mathrm{G}})\)
  is a saturated submodule of
  \(\mathrm{Hom}(\widetilde{\mathrm{B}},\widetilde{\mathrm{G}})\). Therefore it
  suffices to prove that the intersection of a finitely generated submodule
  \(H_1\) and a saturated submodule \(H_2\) of \(H={(\mathcal{O}/\pi)}^n\) is
  again finitely generated. Fix a splitting \(H=H_2 \oplus H_3\), we see that
  \(H_1 \cap H_2\) is the kernel of projection map \(H_1 \to H_3\). This is
  finitely generated because \(\mathcal{O}/\pi\) is a coherent ring.
\end{proof}

\begin{lem}
\label{zhihe}
  Let \(H={(\mathcal{O}/\pi)}^N\). Let \(H_1\) be a finitely generated submodule
  in \(H\) and let \(H_2\) be a saturated finitely generated submodule in \(H\).
  Then
  \[
    H/(H_1 \cap \mathfrak{m} H + \mathfrak{m} H_2)=\bigoplus \mathcal{O}/\pi_j
    \oplus \bigoplus \mathcal{O}/\pi_i'\mathfrak{m} \bigoplus
    {(\mathcal{O}/\mathfrak{m})}^{\oplus s},
  \]
  where the direct sum is finite and \(\pi_i', \pi_j \in \mathfrak{m}-\{0\}\).
\end{lem}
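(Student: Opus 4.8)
The plan is to reduce the statement to a Smith-normal-form computation over the chain ring $R := \mathcal{O}/\pi$, after peeling off a free direct summand coming from $H_2$. First I record the structural facts about $R$: its ideals are totally ordered by inclusion (inherited from the valuation ring $\mathcal{O}$), and being a homomorphic image of $\mathcal{O}$ it is an elementary divisor ring, so every finitely generated submodule of a free $R$-module has a diagonal (Smith) form. Because $H_2$ is saturated and finitely generated, $H/H_2$ is finitely presented and flat over the local ring $R$, hence free; thus $H_2$ is a free direct summand and I may write $H = H_2 \oplus H_3$ with $H_3 \cong R^{m}$ free, letting $p \colon H \to H_3$ denote the projection.

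The key reduction is to eliminate $H_2$ via this projection. Since $H_1 \cap \mathfrak{m}H \subseteq \mathfrak{m}H = \mathfrak{m}H_2 \oplus \mathfrak{m}H_3$, its image in $H/\mathfrak{m}H_2 = (H_2/\mathfrak{m}H_2) \oplus H_3$ lands entirely in the second summand (the $H_2$-components lie in $\mathfrak{m}H_2$ and die), and equals $W := p(H_1 \cap \mathfrak{m}H)$. As $\ker(H \to H/\mathfrak{m}H_2) = \mathfrak{m}H_2$ is contained in $H_1 \cap \mathfrak{m}H + \mathfrak{m}H_2$, I obtain
\[
  H/(H_1 \cap \mathfrak{m}H + \mathfrak{m}H_2) \;=\; \bigl(H_2/\mathfrak{m}H_2\bigr) \oplus \bigl(H_3/W\bigr) \;\cong\; k^{r} \oplus \bigl(H_3/W\bigr),
\]
where $r = \rank H_2$. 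The $k^{r}$ accounts for part of the $(\mathcal{O}/\mathfrak{m})^{\oplus s}$ summand, so it remains to analyze $H_3/W$. To describe $W$ concretely I apply Smith normal form to $H_1 \subseteq H$: choosing a basis $f_1, \dots, f_N$ of $H$ with $H_1 = \bigoplus_i d_i R f_i$, a direct computation gives $H_1 \cap \mathfrak{m}H = \bigoplus_{d_i \text{ unit}} \mathfrak{m} f_i \oplus \bigoplus_{d_i \in \mathfrak{m}} d_i R f_i$, hence $W = \sum_i J_i\, p(f_i)$ where each coefficient $J_i$ is either the maximal ideal $\mathfrak{m}$ (when $d_i$ is a unit) or a principal ideal $d_i R$ (when $d_i \in \mathfrak{m}$).

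The heart of the proof, and the step I expect to be the main obstacle, is to diagonalize such an \emph{ideal-weighted} submodule $W = \sum_i J_i w_i \subseteq H_3$: I claim there is a basis $h_1, \dots, h_m$ of $H_3$ with $W = \bigoplus_l J_l' h_l$, each $J_l'$ being principal, of the form $\pi'\mathfrak{m}$, or equal to $\mathfrak{m}$. Granting this, $H_3/W = \bigoplus_l R/J_l'$ produces exactly the three listed summand types ($R/eR = \mathcal{O}/\pi_j$, including the free case $e=0$; $R/b\mathfrak{m} = \mathcal{O}/\pi_i'\mathfrak{m}$; and $R/\mathfrak{m} = \mathcal{O}/\mathfrak{m}$), and finiteness is clear. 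The obstruction to a naive argument is that $W$ is the sum of a finitely generated piece (the principal $J_i$) and the non-finitely-generated piece $\mathfrak{m} \cdot \langle p(f_i) : d_i \text{ unit}\rangle$, and two submodules cannot be simultaneously diagonalized in general; it is precisely this $\mathfrak{m}$-coefficient that forces the non-principal annihilators $\pi_i'\mathfrak{m}$ to appear.

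I would overcome this by an induction on $m = \rank H_3$ using a \emph{pivot of maximal content}, which the total ordering of ideals in $R$ makes available. Among the finitely many entry-ideals $J_i (w_i)_l$ (coordinates of the generators in any basis) there is a largest, say $\mathcal{I}_1$, which is again of one of the allowed types. If $\mathcal{I}_1 = eR$ is principal it is realized by a single generator $d_{i_0} w_{i_0}$ of content $e$; writing this as $e$ times a primitive vector $h_1$ and splitting $H_3 = Rh_1 \oplus F'$, all other generators have $h_1$-component inside $\mathcal{I}_1 = eRh_1 \subseteq W$, so $W = eRh_1 \oplus W'$ with $W' \subseteq F'$ of the same shape. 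If instead $\mathcal{I}_1 = b\mathfrak{m}$ is non-principal it must arise from a generator with coefficient $\mathfrak{m}$; writing that vector as $b$ times a primitive $h_1$ and splitting off $Rh_1$ the same way yields $W = b\mathfrak{m}\,h_1 \oplus W'$. In either case one allowed cyclic summand splits off and the induction continues on $F'$, which closes the argument; verifying that the off-diagonal components are absorbed by the maximal pivot is the only technical point that needs care.
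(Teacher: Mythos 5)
Your proof is correct, but it is organized along a genuinely different route than the paper's, even though both run on the same engine: Gauss elimination, made possible by the total ordering of ideals in the chain ring \(R=\mathcal{O}/\pi\). The paper does everything in one pass: it passes to \(H'=H/\mathfrak{m}H_2=(H_2/\mathfrak{m}H_2)\oplus H_3\), Gauss-eliminates the image of \emph{all of} \(H_1\) inside this mixed module (a \(k\)-vector space plus a free \(R\)-module), using a pivot rule that prefers generators with no \(H_2\)-component, and only at the end, in the diagonal coordinates \(e_i+\pi_i'f_i\), \(\pi_jf_j\), intersects with \(\mathfrak{m}H'\) to read off the quotient. You reverse the order: you intersect first, computing \(H_1\cap\mathfrak{m}H\) coordinatewise from a Smith normal form of \(H_1\) in the free module \(H\); you then observe that the quotient splits off \(k^{\rank H_2}\) at once, leaving the genuinely new problem of diagonalizing the ideal-weighted module \(W=\sum_i J_i\,p(f_i)\subseteq H_3\), which you solve by a separate pivot-of-maximal-content induction. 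Your organization is more modular --- standard facts (elementary divisor property of \(\mathcal{O}\) and its quotients, flat plus finitely presented equals free) plus one self-contained diagonalization lemma --- and it makes visible exactly where the summands \(\mathcal{O}/\pi_i'\mathfrak{m}\) originate, namely diagonal coefficients of the form \(b\mathfrak{m}\), i.e.\ the non-finitely-generated coefficient \(\mathfrak{m}\) that the paper instead hides inside its pivot-preference rule; what the paper's route buys is brevity, a single elimination instead of two. Two points in your induction deserve an explicit sentence, though both do check out: (i) your Case 1 claim that a principal \(\mathcal{I}_1\) is realized by a principal-coefficient generator can fail when \(\mathfrak{m}\) itself is principal, but then every \(\mathfrak{m}\)-coefficient generator can be renormalized to a principal-coefficient one, and the remaining case \(\mathcal{I}_1=0\) is trivial since then \(W=0\); (ii) to write \(w_{i_0}\) as \(b\) times a primitive vector \(h_1\) you need the pivot column to maximize the entry-ideal of \(w_{i_0}\) itself, which you may assume after re-choosing the column, because multiplying entry-ideals by \(J_{i_0}\) preserves inclusions, so such a column still realizes the global maximum \(\mathcal{I}_1\).
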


\begin{proof}
  Because \(H_2\) is saturated, we may assume that \(H=H_2 \oplus H_3\). Then we
  see \(H':= H/\mathfrak{m}H_2 = H_2/\mathfrak{m}H_2 \oplus H_3\), and let us
  fix a basis of \(H_3\) as \(\{f_i\}\). Notice that \(\overline{H_1}
  \cap \mathfrak{m}\overline{H}=\overline{H_1 \cap \mathfrak{m} H}\), where
  ``bar'' of a module denote its image in \(H/\mathfrak{m}H_2\). Let us consider
  the image of \(H_1\) (suppose there are \(L\) generators) inside \(H'\).
  Project further to \(H_2':=H_2/\mathfrak{m}H_2\), we may assume the image of
  the first \(l\) generators form a basis of the image in \(H_2'\). Without loss
  of generality we can now assume the image of the generators are of the form
  \(e_i+\sum a_{ij}f_j\) and \(\sum a_{ij}f_j\), where \(e_1,\ldots,e_l\) is the
  image of the first \(l\) generators.

  Now let us perform Gauss elimination carefully as following: choose \(a_{ij}\)
  with smallest valuation (if it is possible then we would prefer to a choice
  with \(i > l\)), then do Gauss elimination to cancel all the other \(f_j\)
  coordinates appearing in other generators. Keep doing this procedure. After
  rearranging basis of both \(H_2'\) and \(H_3\), we can assume the image of
  \(L\) generators of \(H_1\) in \(H'\) are of the form \(e_i + \pi_i'f_i\) and
  \(\pi_j f_j\). Now we can finally conclude that the quotient is of the form as
  stated in this lemma.
\end{proof}

\begin{lem}
\label{lift to the most}
In the situation of~\ref{setup}, there exists a \(\pi \in \mathfrak{m}\) with
biggest valuation such that the sequence can be lifted modulo \(\pi\).
\end{lem}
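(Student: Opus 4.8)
The plan is to reformulate the statement as an attainment result. Writing \(v\) for the valuation on \(K\), I want to show that the set of valuations
\(\{v(\pi): \pi \in \m \setminus\{0\}\text{ and the sequence is liftable modulo }\pi\}\)
is a nonempty, downward-closed subset of the positive part of the value group that achieves its supremum. Downward-closedness is the easy half: if the sequence lifts modulo \(\pi'\) and \(v(\pi)\le v(\pi')\), then \((\pi')\subseteq(\pi)\) gives a surjection \(q\colon \M/\pi'\M \twoheadrightarrow \M/\pi\M\), and I would simply push the lift forward, setting \(\widetilde{\B} := q(\widetilde{\B}')\). Then \(\widetilde{\G} = \widetilde{\G}'\otimes_{\O/\pi'}\O/\pi\) is a base change of a flat module, hence flat, giving condition (1); the reduction to \(k\) is unchanged, giving condition (2); and since \(q\) carries the image of each \(\Fil^j_i\) onto the image of \(\Fil^j_i\), any element of \(\widetilde{\Fil^j_i}'\cap\widetilde{\B}'\) maps into \(\widetilde{\Fil^j_i}\cap\widetilde{\B}\) with the same reduction, giving condition (3).

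The substance is the attainment of the maximum, and here I would convert liftability into a concrete module-membership problem. Fix once and for all a saturated \(\O\)-submodule \(\B_0\subseteq\M\) lifting \(\overline{\B}\), together with a splitting \(\M=\B_0\oplus\G_0\); since \(\M,\B_0,\G_0\) are finitely presented and torsion-free over the valuation ring \(\O\) they are free, so conditions (1) and (2) hold automatically for the graph of any \(f\in\m\,\mathrm{Hom}(\widetilde{\B_0},\widetilde{\G_0})\), and the only genuine constraint is (3). For a single filtration \(\Fil^\bullet_i\), Lemma~\ref{lift one filtration} describes exactly which graphs satisfy the corresponding instance of (3): they form a coset \(g_i+\m\,\mathrm{Hom}_{\Fil^\bullet_i}(\widetilde{\B_0},\widetilde{\G_0})\) of a saturated submodule, the saturatedness being the content of the ``moreover'' clause of that lemma. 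Hence liftability modulo \(\pi\) is equivalent to the nonemptiness of the reduction modulo \(\pi\) of the intersection \(\bigcap_i\bigl(g_i+\m\,\mathrm{Hom}_{\Fil^\bullet_i}\bigr)\) computed inside \(\m\,\mathrm{Hom}(\widetilde{\B_0},\widetilde{\G_0})\).

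It then remains to measure the obstruction to this intersection being nonempty and to see that it is controlled by finitely much data. Because every \(g_i\) reduces to \(0\) modulo \(\m\), all discrepancies \(g_i-g_{i'}\) lie in \(\m\,\mathrm{Hom}\), so the obstruction is annihilated by a positive valuation and the feasible set is already nonempty. To pin it down I would invoke Lemma~\ref{fg}, which guarantees that \(\mathrm{Hom}_{\Fil^\bullet}=\bigcap_i\mathrm{Hom}_{\Fil^\bullet_i}\) is finitely generated, together with Lemma~\ref{zhihe}, whose computation of \(H/(H_1\cap\m H+\m H_2)\) identifies the obstruction as a finitely generated torsion module \(\bigoplus\O/\pi_j\oplus\bigoplus\O/\pi_i'\m\oplus(\O/\m)^{\oplus s}\) with all \(\pi_j,\pi_i'\in\m\setminus\{0\}\). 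Since only finitely many invariants occur and each is a genuine element of \(\m\), the largest valuation at which the reduced intersection stays nonempty is the minimum \(\gamma_0\) of the finitely many positive valuations \(v(\pi_j),v(\pi_i')\); this minimum is itself the valuation of a specific element of \(\m\), so any \(\pi\) with \(v(\pi)=\gamma_0\) realizes the biggest valuation for which the sequence lifts. (If the intersection is already nonempty over \(\O\), the sequence lifts over \(\O\) and there is nothing to optimize.) The main obstacle I anticipate is the faithful translation of condition (3) into this coset-intersection problem and the bookkeeping matching the torsion invariants of Lemma~\ref{zhihe} to the binding valuation constraints; once that dictionary is in place, finiteness of the invariants is exactly what forces the supremum to be attained, notwithstanding the possibly dense value group of \(\O\).
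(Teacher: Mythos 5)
You have the right skeleton, and it is in fact the paper's own skeleton in disguise: convert condition (3) for each single filtration into a coset of \(\m\) times a saturated submodule of \(\mathrm{Hom}\) via Lemma~\ref{lift one filtration}, use Lemma~\ref{fg} for finite generation of the joint constraint, and use Lemma~\ref{zhihe} to extract the finiteness that makes the supremum attained; your downward-closedness argument is correct but is the easy half. The genuine error is at the decisive step. Writing \(v\) for the valuation, the largest liftable \(v(\pi)\) is \emph{not} \(\min\{v(\pi_j),v(\pi_i')\}\) taken over the torsion invariants of the quotient module in Lemma~\ref{zhihe}: those invariants depend only on the submodules \(H_1,H_2\), not on the positions \(g_i\) of the cosets, whereas the answer manifestly must depend on the \(g_i\). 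What determines the cutoff is the particular obstruction \emph{class}, through the valuations of its coordinates in the cyclic decomposition; this is exactly the paper's step ``the coordinates of the residue of \(A\) generate a principal ideal \((\pi)\)'', i.e.\ \(\pi\) is a gcd of the coordinates of the obstruction element, not a gcd of the annihilators \(\pi_j,\pi_i'\). A rigid example shows the gap: if the filtrations are generic enough that \(\mathrm{Hom}_{\Fil_i^{\bullet}}=0\) for every \(i\) (the zero submodule is saturated), then each feasible set is the single point \(\{g_i\}\), and the true cutoff is the smallest valuation of an entry of the differences \(g_i-g_{i'}\), while your formula returns a number that does not involve the \(g_i\) at all.

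Second, your single invocation of Lemma~\ref{zhihe} does not match the problem you set up. That lemma measures the failure of \emph{two} cosets to meet, one of \(H_1\cap\m H\) with \(H_1\) finitely generated and one of \(\m H_2\) with \(H_2\) saturated. Your problem is an \(n\)-fold intersection of cosets of the saturated submodules \(\m\mathrm{Hom}_{\Fil_i^{\bullet}}\); once \(n\geq 3\), its obstruction is not a class in any single quotient \(H/(H_1\cap\m H+\m H_2)\), no matter how \(H_1\) and \(H_2\) are chosen (taking \(H_1=\mathrm{Hom}_{\Fil^{\bullet}}\), as you suggest, answers a different membership question). The way to reduce to Lemma~\ref{zhihe} is to intersect one coset at a time: when nonempty, the intersection of the first \(k\) cosets is a coset of \(\bigcap_{i\le k}\m\mathrm{Hom}_{\Fil_i^{\bullet}}=\bigl(\bigcap_{i\le k}\mathrm{Hom}_{\Fil_i^{\bullet}}\bigr)\cap\m H\), whose underlying module is finitely generated by Lemma~\ref{fg} but no longer saturated, and Lemma~\ref{zhihe} then controls the meeting with the \((k+1)\)-st coset. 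This forces an induction on the number of filtrations in which the working modulus shrinks at each step, the cosets are re-reduced at the new level, and one must check that every lifting modulo the smaller modulus is accounted for by the data at the previous level (in the paper this is the role of the unnamed lemma following Lemma~\ref{lift one filtration} and of Lemma~\ref{another characterization}); your framing silently assumes this when it identifies liftability modulo \(\pi\) with membership in reductions of integrally defined cosets. In short, what you defer as ``bookkeeping'' is the actual content: filling these two gaps turns your proposal into the paper's inductive proof rather than a shortcut around it.
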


\begin{proof}
  Let us do induction on the number of filtrations. Suppose there exists a
  \(\pi'\) with biggest valuation such that the first \(n-1\) filtrations are
  lifted, say, \( 0 \to \widetilde{\mathrm{B}} \to
  \widetilde{\mathrm{M}}=\mathrm{M}/(\pi \mathrm{M}) \to \widetilde{\mathrm{G}}
  \to 0\). Let us fix a splitting
  \(\widetilde{\mathrm{M}}=\widetilde{\mathrm{B}} \oplus
  \widetilde{\mathrm{G}}\). By Lemma~\ref{lift one filtration}, the n-th
  filtration can be lifted modulo \(\pi\) also. By the same reasoning as in
  Lemma~\ref{lift one filtration}, these two liftings differ by an \(A \in
  \mathfrak{m}\mathrm{Hom}(\widetilde{\mathrm{B}}, \widetilde{\mathrm{G}}) :=
  \mathfrak{m}H\). Both of these two liftings are not unique. They differ by
  elements in \(H_1 \cap \mathfrak{m}H\) and \(H_2 \cap
  \mathfrak{m}H=\mathfrak{m}H_2\), where
  \(H_1=\mathrm{Hom}_{\mathrm{Fil_1,\ldots,Fil_{n-1}}}(\widetilde{\mathrm{B}},\widetilde{\mathrm{G}})\)
  which is finitely generated by Lemma~\ref{fg} and
  \(H_2=\mathrm{Hom}_{\mathrm{Fil_n}}(\widetilde{\mathrm{B}},\widetilde{\mathrm{G}})\)
  which is saturated. We see that by Lemma~\ref{zhihe}, \(H/(H_1 \cap
  \mathfrak{m} H + \mathfrak{m} H_2)=\oplus \mathcal{O}/\pi_k \bigoplus \oplus
  \mathcal{O}/\pi_k'\mathfrak{m}\) in which residue of \(A\) lives. The
  coordinates of residue of \(A\) generate a principal ideal \((\pi)\) in
  \((\mathcal{O}/\pi')\). Because \(A \in \mathfrak{m}H\) we see that \(\pi \in
  \mathfrak{m}\). It is easy to see that \(\pi\) satisfies the lemma.
\end{proof}

The following lemma will give another characterization of liftable modulo
\(\pi\).

\begin{lem}
\label{another characterization}
  Given \( 0 \to \overline{\mathrm{B}} \to \overline{\mathrm{M}} \to
  \overline{\mathrm{G}} \to 0\) in the situation of~\ref{setup}. Then the
  following are equivalent:
  \begin{enumerate}
  \item the sequence above is liftable modulo \(\pi\)
  \item there exists a splitting \(\mathrm{M}=\mathrm{B}\oplus\mathrm{G}\) that
    reduces to the above sequence and for all \(\overline{v} \in
    \overline{\mathrm{B}} \cap \overline{\mathrm{Fil_i}}\) there exists a
    lifting \(v \in \mathrm{Fil_i}\) such that \(p_2(v) \in \pi G \) where
    \(p_2\) is the obvious projection onto \(G\).
    \item the same as above except \( \overline{v} \) only have to run over a set of
      chosen basis of the induced filtration.
   \end{enumerate}
\end{lem}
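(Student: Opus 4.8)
The plan is to establish the cycle of implications $(1)\Rightarrow(2)\Rightarrow(3)\Rightarrow(1)$. The implication $(2)\Rightarrow(3)$ is immediate, since the condition in (2) quantifies over all of $\overline{\B}\cap\overline{\Fil^j_i}$ and a chosen basis is a subset. The conceptual fact underlying both nontrivial directions is that finitely presented flat modules over the local rings \(\O\) and \(\O/\pi\) are free. Hence a lifting sequence \(0\to\widetilde{\B}\to\widetilde{\M}\to\widetilde{\G}\to 0\) as in Definition~\ref{setup}, having flat \(\widetilde{\G}\), automatically splits over \(\O/\pi\); and such an \(\O/\pi\)-splitting lifts to an integral splitting \(\M=\B\oplus\G\) by Nakayama's lemma. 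Under this dictionary, condition (3) of Definition~\ref{setup}---that \(\widetilde{\Fil^j_i}\cap\widetilde{\B}\) surjects onto \(\overline{\Fil^j_i}\cap\overline{\B}\)---is exactly the pointwise lifting statement appearing in (2).

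For \((1)\Rightarrow(2)\) I would start from a lifting \(0\to\widetilde{\B}\to\widetilde{\M}\to\widetilde{\G}\to 0\). Freeness of \(\widetilde{\G}\) splits it, and lifting a compatible basis of \(\widetilde{\B}\) and of \(\widetilde{\G}\) to \(\M\) produces an integral splitting \(\M=\B\oplus\G\) reducing modulo \(\m\) to the given sequence: the lifts generate \(\M\) by Nakayama and are the correct number, so they form a basis, and by construction \(\B/\pi\B=\widetilde{\B}\), \(\G/\pi\G=\widetilde{\G}\). Now fix \(i,j\) and \(\overline{v}\in\overline{\B}\cap\overline{\Fil^j_i}\). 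Surjectivity furnishes some \(\widetilde{w}\in\widetilde{\Fil^j_i}\cap\widetilde{\B}\) reducing to \(\overline{v}\); choosing any \(w\in\Fil^j_i\) reducing to \(\widetilde{w}\) modulo \(\pi\), the fact that \(\widetilde{w}\in\widetilde{\B}\) forces \(p_2(w)\equiv 0\pmod{\pi}\), i.e.\ \(p_2(w)\in\pi\G\), while \(w\) still reduces to \(\overline{v}\) modulo \(\m\). Thus \(v=w\) witnesses (2).

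For \((3)\Rightarrow(1)\) I would first upgrade the basis condition to the full condition (2) by linearity, then translate (2) back into a lifting. For the linearity step, fix a filtration index \(i\) and choose a basis of \(\overline{\B}\) adapted to \(\overline{\Fil^\bullet_i}\cap\overline{\B}\)---one basis per filtration, since the several filtrations need not admit a common adapted basis. An arbitrary \(\overline{v}\in\overline{\B}\cap\overline{\Fil^j_i}\) is then a \(k\)-linear combination of basis vectors of filtration level \(\geq j\); lifting the coefficients arbitrarily to \(\O\) and combining the given lifts of those basis vectors yields \(v\in\Fil^j_i\) with \(p_2(v)\in\pi\G\) reducing to \(\overline{v}\), because both \(\Fil^j_i\) and \(\pi\G\) are \(\O\)-submodules. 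Once (2) holds, I set \(\widetilde{\B}=\B/\pi\B\) and \(\widetilde{\G}=\G/\pi\G\): this is a short exact sequence of \(\O/\pi\)-modules with \(\widetilde{\G}\) free, hence flat, and for each \(\overline{v}\) the element \(v\) from (2) reduces modulo \(\pi\) to an element \(\widetilde{v}\in\widetilde{\Fil^j_i}\cap\widetilde{\B}\) lifting \(\overline{v}\), which is precisely the surjectivity in Definition~\ref{setup}(3). This exhibits liftability modulo \(\pi\).

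I expect the only genuinely delicate points to be bookkeeping, not depth. The single source of confusion is managing the two reductions simultaneously: the lift \(v\) lives integrally in \(\Fil^j_i\), its class modulo \(\pi\) must land in \(\widetilde{\B}\) (this is exactly \(p_2(v)\in\pi\G\)), and its class modulo \(\m\) must equal the prescribed \(\overline{v}\). The only step that is not pure translation is the linearity argument in \((3)\Rightarrow(2)\), where one is forced to treat each filtration separately because no simultaneous adapted basis exists; the saving grace is that condition (2) quantifies over each \(i\) independently and permits the lift \(v\) to depend on \(i\), \(j\), and \(\overline{v}\), so no compatibility across filtrations is ever required.
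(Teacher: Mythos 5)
Your proposal is correct and takes essentially the same route as the paper: \((2)\Leftrightarrow(3)\) by linearity over a per-filtration adapted basis, \((1)\Rightarrow(2)\) by splitting the \(\mathcal{O}/\pi\)-sequence (flat finitely presented over a local ring is free) and lifting that splitting to \(\mathrm{M}\) via Nakayama, and \((2)\Rightarrow(1)\) by reducing the integral splitting modulo \(\pi\). The paper's proof is a much terser version of the same argument, and the details you supply (freeness of \(\widetilde{\mathrm{G}}\), the bookkeeping of the reductions modulo \(\pi\) and modulo \(\mathfrak{m}\)) are exactly what it leaves implicit.
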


\begin{proof}
  (2) and (3) are obviously equivalent by finiteness of dimension and linearity.
  
  (1) implies (2): liftable means one can find splitting
  \(\widetilde{\mathrm{M}}=\widetilde{\mathrm{B}}\oplus\widetilde{\mathrm{G}}\),
  lift this splitting over \(\mathcal{O}\) we see that (2) is fulfilled.

  (2) implies (1): one just modulo this lifting by \(\pi\).
\end{proof}

Now we are ready to prove the analogue of Langton's theorem for multi-filtered
vector spaces.

\begin{proof}[Proof of the Main Theorem]
  Choose an arbitrary integral model of \((V,\mathrm{Fil}^{\bullet}_i)\), let us
  denote it as \((\M,\mathrm{Fil}^{\bullet}_i)\). Then consider its reduction
  \(\overline{\M}\) with induced multi-filtrations. If it is semistable, then we
  are done. Otherwise, there is a maximal destabilizing subspace
  \(\overline{\mathrm{B}}\) inside \(\overline{\M}\) (cf.\cite[Lemma 2.5]{JHE}):
  \( 0 \to \overline{\mathrm{B}} \to \overline{\mathrm{M}} \to
  \overline{\mathrm{G}} \to 0\). By Lemma~\ref{lift to the most} there exists a
  \(\pi\) with biggest valuation, such that the sequence above is liftable
  modulo \(\pi\). Then \(\pi\) is not \(0\) because
  \((V,\mathrm{Fil}^{\bullet}_i)\) is semistable. Let us denote a lifting as \(
  0 \to \widetilde{\mathrm{B}} \to \widetilde{\mathrm{M}}=\mathrm{M}/(\pi
  \mathrm{M}) \to \widetilde{\mathrm{G}} \to 0\). This lifting gives us a
  splitting \(\widetilde{\mathrm{M}}=\widetilde{\mathrm{B}} \oplus
  \widetilde{\mathrm{G}}\), we can and do lift this splitting further to get
  \(\mathrm{M}=\mathrm{B} \oplus \mathrm{G}\). Let \(\M'=\ker(\M \to
  \widetilde{\mathrm{G}})\) be another integral model.

  Now we have the following sequence
  \[
    0 \to \frac{\pi \M}{\pi \M'}=\frac{\M}{\M'}=\widetilde{\mathrm{G}} \to \frac{\M'}{\pi
      \M'} \to \frac{\M'}{\pi \M}=\widetilde{\mathrm{B}} \to 0
  \]
  tensoring with \(\kappa\) gives
  \[
    0 \to \overline{\mathrm{G}} \to \overline{\mathrm{M'}} \to
    \overline{\mathrm{B}} \to 0
  \]
  After computing in the single filtered case we see that it is a sequence of
  multi-filtered vector spaces.

  We claim that there is no splitting \(\overline{\mathrm{B}} \to
  \overline{\mathrm{M'}}\) as multi-filtered vector spaces. Otherwise there
  exists such a splitting \(\overline{h}: \overline{\mathrm{B}} \to
  \overline{\mathrm{M'}} \). Then lift this splitting to a morphism \(h:
  \mathrm{B} \to \pi\mathrm{G}\). Now let us consider the splitting
  \(\mathrm{M}=\Gamma_{h} \oplus \mathrm{G}\). Let \(\overline{v} \in
  \overline{\mathrm{B}} \cap \overline{\mathrm{F_i}}\), then by assumption we
  can lift \(\overline{h}(\overline{v})\) to \(v \in \mathrm{F_i}\) and
  \(h(p_1(v))-p_2(v) \in \pi\mathfrak{m} G\). Hence under the new splitting
  \(\mathrm{M}=\Gamma_{h} \oplus \mathrm{G}\), \(p'_2(v) \in \pi'G\) for some
  \(\pi' \in \pi \mathfrak{m}\) where \(p'_2=h \circ p_1 - p_2\) is the new
  obvious projection onto \(G\). Now by Lemma~\ref{another characterization} we
  can lift the original sequence modulo \(\pi'\) for some \(\pi' \in \pi
  \mathfrak{m}\) contradicting our assumption that \(\pi\) has the biggest
  valuation modulo which one can lift the sequence.

  Our last claim is that the maximal destabilizing subspace \(\overline{B'}\) of
  \(\overline{\M'}\) has either slope or dimension less than \(\overline{B}\).
  Denote the image of \(\overline{B'}\) in \(\overline{B}\) by
  \(\im(\overline{B'})\). We have
  \[
    \mu (\overline{B'}) \leq \frac{w(\overline{B'} \cap
      \overline{G})+w(\im(\overline{B'}))}{\dim(\overline{B'})} \leq
    \frac{\dim(\overline{B'} \cap \overline{G}) \times \mu (\overline{B}) +
      \dim(\im(\overline{B'})) \times \mu (\overline{B})}{\dim(\overline{B'})} =
    \mu (\overline{B})
  \]
  where the first equality can be obtained only if \(\overline{B'} \cap
  \overline{G}=0\). In that case, we see immediately that \(w(\overline{B'})
  \leq w(\im(\overline{B'}))\) where equality is obtained only if
  \(\overline{B'}\) is a section of its image inside \(\overline{B}\) as a
  multi-filtered vector space. Therefore if
  \(\mu(\overline{B'})=\mu(\overline{B})\) then \(\overline{B'}\) is isomorphic
  to \(\im(\overline{B'})\) (as two multi-filtered vector spaces) which is a
  proper subspace of \(\overline{B}\) (by last paragraph). Hence in the
  situation of \(\mu(\overline{B'})=\mu(\overline{B})\) the dimension of
  \(\overline{B'}\) must be strictly less than that of \(\overline{B}\).

  Because the ranges of slope and dimension are finite, after doing the
  procedure above finitely many times, we see that the reduction would become
  semistable.
\end{proof}

\section{Stacks of semistable multi-filtered vector bundles}
\label{application}

In this section, let us make the following definition.

\begin{defn}
  Fix two natural numbers \(s\) and \(n\) and \(s\) non-decreasing functions
  \(l_i: \mathbb{N} \to \{0,\dots,n\}\) such that \(l_i(0)=0\) and
  \(l_i(\infty)=n\), we call such a datum \emph{type} and denote it as
  \((s,n;l_i)\).
\end{defn}

Given a type \((s,n;l_i)\), we can consider the stack
\(\mathcal{M}_{(s,n;l_i)}\) which associates any scheme \(X\) the groupoid of
multi-filtered rank \(n\) vector bundles of type \((s,n;l_i)\) on \(X\), i.e.,
there are \(s\) decreasing exhaustive separated filtrations \(\Fil_j^{\bullet}\)
satisfying the following two properties:
\begin{enumerate}
\item the graded pieces \(\Gr_i^j=\Fil_i^j/\Fil_i^{j+1}\) are flat over \(X\);
\item the ranks of \(\Fil_i^j\) are \(n-l_i(j)\).
\end{enumerate}

It is not hard to see that
\(\mathcal{M}_{(s,n;l_i)}=[\prod_i\mathrm{Fl}(n;l_i)/\mathrm{GL}_n]\) where
\(\mathrm{Fl}(n;l_i)\) is the flag variety with numerical conditions as above. Therefore the stack
\(\mathcal{M}_{(s,n;l_i)}\) is an algebraic stack and it is quasi-separated and
of finite type over \(\Spec(\mathbb{Z})\).

There is an open subset \({(\prod_i\mathrm{Fl}(n;l_i))}^{s.s.}\) in
\(\prod_i\mathrm{Fl}(n;l_i)\) whose points correspond to semistable
multi-filtered vector spaces. As being semistable is independent of choice of
basis we see that this open subset defines an open substack
\({\mathcal{M}_{(s,n;l_i)}}^{s.s.}=[{(\prod_i\mathrm{Fl}(n;l_i))}^{s.s.}/\mathrm{GL}_n]\)
in \(\mathcal{M}_{(s,n;l_i)}\) which is just the stack of rank \(n\)
multi-filtered vector bundles of type \((s,n;l_i)\) whose fibers are semistable.

\begin{thm}
  \({\mathcal{M}_{(s,n;l_i)}}^{s.s.}\) is a quasi-separated algebraic stack
  which satisfies the existence part of the valuative criterion (c.f.~\cite[Tag
  0CL9]{stacks-project}) over \(\Spec(\mathbb{Z})\).
\end{thm}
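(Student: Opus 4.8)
The plan is to deduce both assertions quickly from the structure already in place, the genuine analytic content being entirely carried by the Main Theorem~\ref{Main Theorem}. I will treat the two claims in turn.

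For the structural statement, I would simply observe that \({\mathcal{M}_{(s,n;l_i)}}^{s.s.}\) is an \emph{open} substack of \(\mathcal{M}_{(s,n;l_i)}\), and that the latter has already been identified above as a quasi-separated algebraic stack of finite type over \(\Spec(\mathbb{Z})\). Open substacks of algebraic stacks are algebraic, so the first half is immediate. For quasi-separatedness, note that the diagonal of the open substack is the base change of the ambient diagonal \(\Delta\colon\mathcal{M}_{(s,n;l_i)} \to \mathcal{M}_{(s,n;l_i)} \times \mathcal{M}_{(s,n;l_i)}\) along the open immersion (in particular monomorphism) \({\mathcal{M}}^{s.s.} \times {\mathcal{M}}^{s.s.} \to \mathcal{M}_{(s,n;l_i)} \times \mathcal{M}_{(s,n;l_i)}\); since quasi-compactness and quasi-separatedness of a morphism are stable under base change, these properties of \(\Delta\) pass to the open substack. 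Thus \({\mathcal{M}_{(s,n;l_i)}}^{s.s.}\) is a quasi-separated algebraic stack with no further work.

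For the valuative criterion, let \(A\) be a valuation ring with \(K=\Frac(A)\), and suppose given a morphism \(\Spec(K) \to {\mathcal{M}}^{s.s.}\), that is, a semistable multi-filtered \(K\)-vector space \((V,\Fil^{\bullet}_i)\) of type \((s,n;l_i)\). I would apply the Main Theorem~\ref{Main Theorem} directly to \((V,\Fil^{\bullet}_i)\) over the valued field \(K\), obtaining an integral model \((\M,\Fil^{\bullet}_i)\) over \(A\) whose reduction \(\overline{\M}\) is semistable. I emphasize that this uses \emph{no} extension of the valuation ring: one takes \(A'=A\), \(K'=K\), which is in fact stronger than what Tag 0CL9 demands. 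Next I must check that this integral model genuinely defines an \(A\)-point of the moduli stack of the correct type. The lattice \(\M\) is finitely presented and torsion-free over the valuation ring \(A\), hence free of rank \(n=\dim_K V\). For each \(i,j\) the induced submodule \(\Fil_i^{j}=\Fil_i^{j}(V)\cap\M\) is saturated, since \(\M/\Fil_i^{j}\) embeds into the \(K\)-vector space \(V/\Fil_i^{j}(V)\) and is therefore torsion-free; the same computation applied to the inclusion \(\Fil_i^{j+1}\subseteq\Fil_i^{j}\) shows each graded piece \(\Fil_i^{j}/\Fil_i^{j+1}\) is torsion-free, hence flat over \(A\), verifying condition (1) of the stack. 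As \(\Fil_i^{j}\) is a lattice in \(\Fil_i^{j}(V)\), its rank equals \(\dim_K\Fil_i^{j}(V)=n-l_i(j)\), verifying condition (2). So \((\M,\Fil^{\bullet}_i)\) defines a morphism \(g\colon \Spec(A)\to \mathcal{M}_{(s,n;l_i)}\) extending the given \(K\)-point, and by flatness the closed fibre carries the same type \((s,n;l_i)\).

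It remains to see that \(g\) factors through the open substack \({\mathcal{M}}^{s.s.}\), i.e.\ that \emph{every} fibre of \(\Spec(A)\) lands in the semistable locus, not merely the generic and the special one. Here is the one step I expect to be non-formal beyond the Main Theorem, and it is resolved by a small topological remark. The subset \(g^{-1}({\mathcal{M}}^{s.s.})\subseteq \Spec(A)\) is open, and it contains the closed point \(\m\) precisely because \(\overline{\M}\) is semistable. But \(A\) is a local ring, so the only open of \(\Spec(A)\) containing the closed point is \(\Spec(A)\) itself: any basic open \(D(f)\ni \m\) forces \(f\notin\m\), hence \(f\) a unit and \(D(f)=\Spec(A)\). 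Consequently \(g^{-1}({\mathcal{M}}^{s.s.})=\Spec(A)\), so \(g\) factors through \({\mathcal{M}}^{s.s.}\) and provides the required extension \(\Spec(A)\to {\mathcal{M}}^{s.s.}\) of the original \(K\)-point. This observation is exactly what upgrades semistability of the single closed fibre to semistability of all fibres, and thereby sidesteps having to control the possibly many intermediate primes of a higher-rank valuation ring.
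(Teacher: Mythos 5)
Your proposal is correct and takes essentially the same approach as the paper: quasi-separatedness and algebraicity come from the open-substack presentation established before the theorem, and the existence part of the valuative criterion is deduced by applying the Main Theorem~\ref{Main Theorem} to the generic fibre. The additional details you supply --- that the integral model genuinely defines an \(A\)-point of the correct type, and that semistability at the closed point forces all of \(\Spec(A)\) (including intermediate primes of a higher-rank valuation ring) into the open semistable locus, since the only open subset of the spectrum of a local ring containing the closed point is the whole space --- are steps the paper's terse proof leaves implicit, and you verify them correctly.
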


\begin{proof}
  From the discussion before theorem we know that
  \({\mathcal{M}_{(s,n;l_i)}}^{s.s.}\) is an algebraic stack quasi-separated and
  of finite type over \(\Spec(\mathbb{Z})\). By our Theorem~\ref{Main Theorem},
  we see the structure morphism \({\mathcal{M}_{(s,n;l_i)}}^{s.s.} \to
  \Spec(\mathbb{Z})\) satisfies the valuative criterion in~\cite[Tag
  0CL9]{stacks-project}.
\end{proof}

\section*{Acknowledgements}

The author wants to thank his advisor Johan de Jong for telling him this
elementary yet interesting problem.

\bibliographystyle{alpha}
\bibliography{Langton}

\end{document}